\title[A note on Pythagorean Triples]{A note on Pythagorean Triples}
\author[R. Amato]{Roberto Amato}
\address[R. Amato]{Department of Engineering, University of Messina, 98166 Messina - (Italy)}
\email{ramato@unime.it}
\numberwithin{equation}{section}
\thanks{{\em 2010 Mathematics Subject Classification:}  11D61}
\keywords{Pythagorean triples, Diophantine equations.}
\newtheorem{theorem}{Theorem}[section]
\newtheorem{example}{Example}[section]
\begin{document}

\begin{abstract}
Some relations among Pythagorean triples are established. The main tool is a fundamental characterization of the Pythagorean triples throught a chatetus which allows to determine relationships with Pythagorean triples having the same chatetus raised to an integer power.
\end{abstract}

\maketitle
\section{Introduction}\label{Introduction}
Let $x$, $y$ and $z$ be positive integers satisfying
$$x^2+y^2=z^2.$$
Such a triple $(x,y,z)$ is called Pythagorean triple and if, in addition, $x$, $y$ and $z$ are co-prime, it is called primitive Pythagorean triple. Preliminarly, let us recall a recent novel formula that allows to obtain all Pythagorean triples as follows.

\begin{theorem}
	$(x, y, z)$ is a Pythagorean triple if and only if there exists $d \in C(x)$ such that
		\begin{equation}\label{1.1}
	x=x, \quad y= \dfrac{x^2}{2d}-\dfrac{d}{2}, \quad z=\dfrac{x^2}{2d}+\dfrac{d}{2}\,,
	\end{equation}
with $x$ positive integer, $x \geq 1$, and where
\begin{equation*}
C(x)=\left\{
\begin{array}{ll}
D(x), & \hbox{if $x$ is odd,} \\
\\
D(x) \cap P(x), & \hbox{if $x$ is even,}
\end{array}
\right.
\end{equation*}
with
\begin{equation*}
D(x)=\left\{ d \in {\mathbb{N}} \quad \text{such that $d \leq x$ and $d$ divisor of $x^2$}\right\}\,,
\end{equation*}
and if $x$ is even with $x = 2^n k$, $n \in \mathbb{N}$ and $k \geq 1$ odd fixed, with
\begin{equation*}
P(x)=\left\{ d \in {\mathbb{N}} \quad \text{such that $d=2^s l$, with $l$ divisor of $x^2$ and $s \in \{1, 2, \ldots, n-1\}$}\right\}.
\end{equation*}
\end{theorem}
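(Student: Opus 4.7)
The proof splits naturally into the two implications of the biconditional. For the easier direction, assuming $d \in C(x)$ and defining $y,z$ by (1.1), I would verify the Pythagorean relation by direct algebraic expansion: since $z-y = d$ and $z+y = x^2/d$, one has $z^2 - y^2 = (z-y)(z+y) = x^2$, giving $x^2 + y^2 = z^2$ at once. The remaining content here is to check that $y$ and $z$ are \emph{positive integers}: positivity of $y$ reduces to $d < x$, while integrality of both $y$ and $z$ is equivalent to the single divisibility condition $2d \mid x^2 - d^2$, which the definition of $C(x)$ is designed to guarantee.

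For the harder direction, starting from a Pythagorean triple $(x,y,z)$, I would set $d := z - y$ and exhibit this $d$ as the element of $C(x)$ that reproduces the given triple via (1.1). The identity $(z-y)(z+y) = x^2$ immediately gives $z+y = x^2/d$, and solving for $y$ and $z$ recovers (1.1). Membership $d \in D(x)$ is then quick: $d \mid x^2$ follows from $d(z+y) = x^2$, and $d \leq x$ follows from $d = z-y \leq z+y$ combined with $d(z+y) = x^2$, which forces $d^2 \leq x^2$.

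The main obstacle, and the technical heart of the argument, is the parity refinement when $x$ is even: I must show that the $d$ constructed above actually lies in $P(x)$, and conversely that every admissible $d$ really does produce integer $y,z$. Writing $x = 2^n k$ and $d = 2^s l$ with $k, l$ odd, I would compute the $2$-adic valuation $v_2(x^2 - d^2)$ by cases according to the relative sizes of $s$ and $n$, and compare it against $v_2(2d) = s+1$. When $s = 0$ one sees $x^2 - d^2$ is odd while $2d$ is even, ruling out odd $d$; the analysis of $s\geq 1$ then pins down exactly the range of $s$ compatible with integrality, and simultaneously verifies that any $d=2^s l$ in this range yields $y,z\in\mathbb{N}$. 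Combining this valuation bookkeeping with the membership in $D(x)$ already established gives $d \in D(x)\cap P(x) = C(x)$ and closes the equivalence.
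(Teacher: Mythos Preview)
The paper does not prove this theorem at all: it is stated in the Introduction with the words ``let us recall a recent novel formula'' and is attributed to the reference \cite{Amato}; no argument for it appears anywhere in the present text. There is therefore no proof in this paper against which your attempt can be compared.

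That said, your outline is the natural one and is essentially how such a characterization is established. Setting $d=z-y$, reading off $z+y=x^2/d$ from $z^2-y^2=x^2$, and then solving the resulting linear system is exactly the right mechanism, and your checks that $d\mid x^2$ and $d\le x$ are correct. The $2$-adic bookkeeping you describe for even $x$ is also the right tool. One cautionary remark: if you carry that valuation analysis through carefully, writing $x=2^n k$ and $d=2^s l$ with $k,l$ odd, you will find that the integrality condition $2d\mid x^2-d^2$ forces $1\le s\le 2n-1$ rather than $1\le s\le n-1$ as printed here (for instance, $x=6$ has $n=1$, which would make $P(6)=\varnothing$ under the stated range, yet $(6,8,10)$ is a Pythagorean triple with $d=2$). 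So your method is sound, but be prepared for it to disagree with the exponent bound as typeset; the discrepancy lies in the statement, not in your argument.
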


We want to find relations between the primitive Pythagorean triple $(x,y,z)$ generated by any predeterminated $x$ positive odd integer using \eqref{1.1} and the primitive Pythagorean triple generated by $x^m$ with $m\in \mathbb{N}$ and $m \geq 2$. In this paper we take care of relations only for the case in  which the primitive triple $(x,y,z)$ is generated whith $d \in C(x)$ only with $d=1$ and the primitive triple $(x^m,y',z')$ is generated with $d_m \in C(x^m)$ only with $d_m=1$ obtaining formulas that give us $y'$ and $z'$ directly from $x$, $y$, $z$. This is the first step to investigate on other relations between Pythagorean triples.

\section{Results}\label{Results}
We have that the following theorem holds.

\begin{theorem}
	Let $(x, y, z)$ be the primitive Pythagorean triple generated by any predeterminated positive odd integer $x \geq 1$ using \eqref{1.1} with $z-y=d=1$ and let $(x^m,y',z')$ be the primitive  Pythagorean triple generated by $x^m$, $m\in \mathbb{N}$, $m \geq 2$, using \eqref{1.1} with $z'-y'=d_m=1$, we have the following formulas
	\begin{eqnarray}\label{2.1}
	y'&=&y\left[ 1+ \sum_{p=1}^{m-1}x^{2p}\right]\,,\nonumber\\
	&&\\
	 z'&=&y\left[ 1+ \sum_{p=1}^{m-1}x^{2p}\right]+1\,,\nonumber
	\end{eqnarray}
	for every $m \in \mathbb{N}$ and $m \geq 2$.
	
	\noindent Moreover we have also
	\begin{equation} \label{2.2}
z\left[ (-1)^{m-1}+ \sum_{p=1}^{m-1} (-1)^{m-1-p}x^{2p}\right]=	\left\{
	\begin{array}{ll}
	y'& \hbox{if $m$ is even,}\\
	z'& \hbox{if $m$ is odd,}\\
	\end{array}
	\right.
	\end{equation}
and 
	\begin{equation} \label{2.3}
z\left[ (-1)^{m-1}+ \sum_{p=1}^{m-1} (-1)^{m-1-p}x^{2p}\right] + (-1)^{m-2}= 	\left\{
\begin{array}{ll}
z'& \hbox{if $m$ is even,}\\
y'& \hbox{if $m$ is odd.}\\
\end{array}
\right.
\end{equation}
\end{theorem}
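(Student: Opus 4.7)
My plan is to reduce everything to the closed forms coming from (\ref{1.1}) with $d=1$ and $d_m=1$, and then recognise each bracketed quantity as a (possibly alternating) geometric sum. Setting $d=1$ in (\ref{1.1}) gives
\[
y=\frac{x^{2}-1}{2},\qquad z=\frac{x^{2}+1}{2},
\]
and the analogous choice $d_m=1$ for $x^{m}$ gives $y'=(x^{2m}-1)/2$ and $z'=(x^{2m}+1)/2$. In particular $z'=y'+1$, so throughout I only need to verify the claimed identities for $y'$ (in (\ref{2.1}) and one branch of (\ref{2.2})); the $z'$ statements then follow by adding $1$, which accounts for the $+(-1)^{m-2}$ correction in (\ref{2.3}) once the parity is handled.

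For (\ref{2.1}) I would rewrite the bracket as the geometric sum $1+\sum_{p=1}^{m-1}x^{2p}=\sum_{p=0}^{m-1}x^{2p}=\dfrac{x^{2m}-1}{x^{2}-1}$. Multiplying by $y=(x^{2}-1)/2$ telescopes the denominator and produces $(x^{2m}-1)/2=y'$, and then $z'=y'+1$ is exactly the second line of (\ref{2.1}).

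For (\ref{2.2}) the key algebraic step is to factor $(-1)^{m-1-p}=(-1)^{m-1}(-1)^{p}$, so that
\[
(-1)^{m-1}+\sum_{p=1}^{m-1}(-1)^{m-1-p}x^{2p}=(-1)^{m-1}\sum_{p=0}^{m-1}(-x^{2})^{p}=(-1)^{m-1}\,\frac{1-(-1)^{m}x^{2m}}{1+x^{2}}.
\]
Multiplying this by $z=(x^{2}+1)/2$ cancels the denominator and leaves $(-1)^{m-1}\bigl(1-(-1)^{m}x^{2m}\bigr)/2$. A case split on the parity of $m$ then identifies this expression with $z'=(x^{2m}+1)/2$ when $m$ is odd and with $y'=(x^{2m}-1)/2$ when $m$ is even, which is exactly (\ref{2.2}).

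Finally, (\ref{2.3}) follows immediately from (\ref{2.2}) together with $z'-y'=1$: since $(-1)^{m-2}=(-1)^{m}$, adding $(-1)^{m-2}$ to the left-hand side of (\ref{2.2}) converts $z'$ into $z'-1=y'$ when $m$ is odd and $y'$ into $y'+1=z'$ when $m$ is even. The only real obstacle is bookkeeping the parity and signs in the alternating geometric sum of (\ref{2.2}); the rest is a direct consequence of the geometric series formula and of the identity $z'-y'=d_m=1$.
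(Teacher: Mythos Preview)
Your argument is correct and rests on the same core idea as the paper: use the closed forms $y=(x^{2}-1)/2$, $z=(x^{2}+1)/2$, $y'=(x^{2m}-1)/2$, $z'=(x^{2m}+1)/2$ coming from~(\ref{1.1}) with $d=d_m=1$, and then identify the bracketed expressions as (alternating) geometric sums so that the factors $x^{2}-1$ or $x^{2}+1$ cancel. The paper does exactly this for~(\ref{2.1}) and for the odd-$m$ case of~(\ref{2.2}), via the factorisations $x^{2m}-1=(x^{2}-1)\sum_{p=0}^{m-1}x^{2p}$ and $x^{2m}+1=(x^{2}+1)\sum_{p=0}^{m-1}(-1)^{p}x^{2p}$ (the latter only for odd $m$); it also carries along the parametrisation $x=2n+1$, which you rightly omit as unnecessary. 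The one place where your route is genuinely cleaner is the even-$m$ branch of~(\ref{2.2}): the paper establishes that case by writing out the product $(x^{2}+1)\bigl[-1+\sum_{p=1}^{m-1}(-1)^{m-1-p}x^{2p}\bigr]$ term by term and cancelling, whereas your trick of factoring $(-1)^{m-1-p}=(-1)^{m-1}(-1)^{p}$ to obtain $(-1)^{m-1}\sum_{p=0}^{m-1}(-x^{2})^{p}$ handles both parities at once with the single closed form $(1-(-1)^{m}x^{2m})/(1+x^{2})$. Both arguments then derive~(\ref{2.3}) from~(\ref{2.2}) and $z'-y'=1$ in the same way.
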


\begin{proof}
Let $x$ be a positive odd integer that we consider as $x=2n+1$, $n \in \mathbb{N}$, so that using \eqref{1.1} with $d=z-y=1$ it results the primitive Pythagorean triple
\begin{equation}\label{2.4}
  x=2n+1, \quad y=2n^2+2n, \quad z= 2n^2+2n+1,
\end{equation}
while considering $x^m$, $m \in \mathbb{N}$, $m \geq 2$, using \eqref{1.1} with $d_m=z'-y'=1$ it results the primitive Pythagorean triple
\begin{equation}\label{2.5}
x^m, \quad y'= \dfrac{x^{2m}-1}{2}, \quad z'=\dfrac{x^{2m}+1}{2}\,.
\end{equation}
From the comparison between \eqref{2.4} and \eqref{2.5} we obtain
\begin{eqnarray*}
y'&=&\dfrac{(2n+1)^{2m}-1}{2}=\dfrac{\left[ (2n+1)^{2}-1\right] }{2}\left[(2n+1)^{2(m-1)}+ (2n+1)^{2(m-2)}+ \ldots +1\right] \\
&=&\dfrac{\left( 4n^2+4n\right) }{2}\left[1+ \sum_{p=1}^{m-1} (2n+1)^{2p}\right]=(2n^2+2n)\left[1+ \sum_{p=1}^{m-1} (2n+1)^{2p}\right]=y\left[1+ \sum_{p=1}^{m-1}x^{2p}\right]\,,
\end{eqnarray*}
that is the first of \eqref{2.1}, and because $d_m=z'-y'=1$ we obtain also
\begin{equation*}
z'=y\left[1+ \sum_{p=1}^{m-1}x^{2p}\right]+1\,,
\end{equation*}
that is the second of \eqref{2.1}.

\noindent Moreover, if $m$ is odd, using \eqref{2.4} and \eqref{2.5} we obtain
\begin{eqnarray*}
	z'&=&\dfrac{(2n+1)^{2m}+1}{2}=\dfrac{\left[ (2n+1)^{2}+1\right] }{2}\left[(2n+1)^{2(m-1)}- (2n+1)^{2(m-2)}+ \ldots -(2n+1)^2+1\right] \\
	&=&(2n^2+2n+1)\left[1+ \sum_{p=1}^{m-1} (-1)^{m-1-p}(2n+1)^{2p}\right]=z\left[1+ \sum_{p=1}^{m-1}(-1)^{m-1-p}x^{2p}\right]\,,
\end{eqnarray*}
that is the second case of \eqref{2.2}, and because $d_m=z'-y'=1$ we obtain also
\begin{equation*}
y'=z\left[1+ \sum_{p=1}^{m-1}(-1)^{m-1-p}x^{2p}\right]-1\,,
\end{equation*}
that is the second case of \eqref{2.3}.

\noindent At last, if $m$ is even, we proof that we obtain
\begin{equation}\label{2.6}
y'=z\left[-1+ \sum_{p=1}^{m-1}(-1)^{m-1-p}x^{2p}\right]\,,
\end{equation}
that is the first case of \eqref{2.2} and because $d_m=z'-y'=1$ we obtain also
\begin{equation*}
z'=z\left[-1+ \sum_{p=1}^{m-1-p}(-1)^{m-1-p}x^{2p}\right]+1\,,
\end{equation*}
that is the first case of \eqref{2.3}.

\noindent In order to prove that \eqref{2.6} holds we can write it using \eqref{2.4} and \eqref{2.5} in the following way
\begin{equation}\label{2.7}
	\dfrac{(2n+1)^{2m}-1}{2}=(2n^2+2n+1)\left[-1+ \sum_{p=1}^{m-1} (-1)^{m-1-p}(2n+1)^{2p}\right]\,,
\end{equation}
and we prove that \eqref{2.7} is an identity. In fact
\begin{equation*}
(2n+1)^{2m}-1=(4n^2+4n+2)\left[-1+ \sum_{p=1}^{m-1} (-1)^{m-1-p}(2n+1)^{2p}\right]\,,
\end{equation*}
\begin{equation*}
(2n+1)^{2m}-1=\left[ (2n+1)^2+1)\right] \left[-1+ \sum_{p=1}^{m-1} (-1)^{m-1-p}(2n+1)^{2p}\right]\,,
\end{equation*}
\begin{equation*}
(2n+1)^{2m}=-(2n+1)^2+\sum_{p=1}^{m-1} (-1)^{m-1-p}(2n+1)^{2(p+1)}+\sum_{p=1}^{m-1} (-1)^{m-1-p}(2n+1)^{2p}\,,
\end{equation*}
\begin{eqnarray}\label{2.8}
(2n+1)^{2m}&=&-(2n+1)^2+\Big[(-1)^{m-2}(2n+1)^{4} +(-1)^{m-3}(2n+1)^{6}\nonumber \\
&+&(-1)^{m-4}(2n+1)^{8}+ \ldots-(2n+1)^{2(m-1)}+ (2n+1)^{2m}\Big]\nonumber \\
&+& \Big[(-1)^{m-2}(2n+1)^{2} +(-1)^{m-3}(2n+1)^{4}+ (-1)^{m-4}(2n+1)^{6}\nonumber \\
&+& (-1)^{m-5}(2n+1)^{8}+\ldots-(2n+1)^{2(m-2)}+ (2n+1)^{2(m-1)}\Big]\,,
\end{eqnarray}
and because $m$ is even, after simplifying \eqref{2.8} we get
$$(2n-1)^{2m}=(2n-1)^{2m}\,,$$
so we prove that \eqref{2.7} is an identity and, therefore, \eqref{2.6} holds. 
Consequently, formulas \eqref{2.1}, \eqref{2.2} and \eqref{2.3} have thus been proved.

\noindent Obviously, because $z-y=d=1$, then we can obtain also other relations between $(x,y,z)$ and $(x^m,y',z')$, for example \eqref{2.1} are equivalent to
\begin{equation*}
y'=z+ y\sum_{p=1}^{m-1}x^{2p}-1\,,
\end{equation*}
\begin{equation*}
z'=z+ y\sum_{p=1}^{m-1}x^{2p}\,.
\end{equation*}
By similar way we can obtain other relations from \eqref{2.2} and \eqref{2.3}.
\end{proof}

To prove the accuracy of formulas \eqref{2.1}, \eqref{2.2} and \eqref{2.3} we consider the following example.

\begin{example}
	
	{\em We give the following table that can be extended for each primitive triples $x$, $y$, $z$, and $x^s$, $y'$, $z'$ with $x-y=1$ and $x'-y'=1$.
		
		\noindent Usung \eqref{2.1} we obtain
			\begin{center}
			\begin{tabular}{|p{1,1cm}|p{7cm}|p{2cm}|}
				\hline
				$x=3$ & $y=4$ &$z=5$ \\
				$x=3^2$ & $y'=4(1+3^2)=40$ &$z'=41$ \\
				$x=3^3$ & $y'=4(1+3^2+3^4)=364$ &$z'=365$ \\
				$x=3^4$ & $y'=4(1+3^2+3^4+3^6)=3280$ &$z'=3281$ \\
				$x=3^5$ & $y'=4(1+3^2+3^4+3^6+3^8)=29524$ &$z'=29525$ \\
				$x=3^6$ & $y'=4(1+3^2+3^4+3^6+3^8+3^{10})=265720$ &$z'=265721$ \\
				$\vdots$ & $\vdots$ & $\vdots$\\
					 		\hline
	 \end{tabular}
 \end{center}
While, using \eqref{2.2} and \eqref{2.3} we obtain
			\begin{center}
		\begin{tabular}{|p{1,1cm}|p{7cm}|p{2cm}|}
			\hline
			$x=3$ & $y=4$ &$z=5$ \\
			$x=3^2$ & $y'=5(-1+3^2)=40$ &$z'=41$ \\
			$x=3^3$ & $z'=5(1-3^2+3^4)=365$ &$y'=364$ \\
			$x=3^4$ & $y'=5(-1+3^2-3^4+3^6)=3280$ &$z'=3281$ \\
			$x=3^5$ & $z'=5(1-3^2+3^4-3^6+3^8)=29525$ &$y'=29524$ \\
			$x=3^6$ & $y'=5(-1+3^2-3^4+3^6-3^8+3^{10})=265720$ &$z'=265721$ \\
			$\vdots$ & $\vdots$ & $\vdots$\\
			\hline
		\end{tabular}
\end{center}}
\end{example}

\end{document}